\theoremstyle{plain}
\newtheorem{lema}{Lemma}
\newtheorem{prop}[lema]{Proposition}
\newtheorem{teo}[lema]{Theorem}
\theoremstyle{remark}
\theoremstyle{definition}
\begin{document}

\title[Spaces which invert weak homotopy equivalences]{Spaces which invert weak homotopy equivalences}

\author[J.A. Barmak]{Jonathan Ariel Barmak $^{\dagger}$}

\thanks{$^{\dagger}$ Researcher of CONICET. Partially supported by grant UBACyT 20020160100081BA}

\address{Universidad de Buenos Aires. Facultad de Ciencias Exactas y Naturales. Departamento de Matem\'atica. Buenos Aires, Argentina.}

\address{CONICET-Universidad de Buenos Aires. Instituto de Investigaciones Matem\'aticas Luis A. Santal\'o (IMAS). Buenos Aires, Argentina. }

\email{jbarmak@dm.uba.ar}

\begin{abstract}
It is well known that if $X$ is a CW-complex, then for every weak homotopy equivalence $f:A\to B$, the map $f_*:[X,A]\to [X,B]$ induced in homotopy classes is a bijection. For which spaces $X$ is $f^*:[B,X]\to [A,X]$ a bijection for every weak equivalence $f$? This question was considered by J. Strom and T. Goodwillie. In this note we prove that a non-empty space inverts weak equivalences if and only if it is contractible.  \end{abstract}

\subjclass[2010]{55Q05, 55P15, 54G05, 54G10}

\keywords{Weak homotopy equivalences, homotopy types, non-Hausdorff spaces.}

\maketitle

We say that a space $X$ \textit{inverts weak homotopy equivalences} if the functor $[-,X]$ inverts weak equivalences, that is, for every weak homotopy equivalence $f:A\to B$, the induced map $f^*:[B,X]\to [A,X]$ is a bijection. As usual $[A,X]$ stands for the set of homotopy classes of maps from $A$ to $X$. This property is clearly a homotopy invariant. In \cite{S} Jeff Strom asked for the characterization of such spaces. Tom Goodwillie observed that if $X$ inverts weak equivalences and is $T_1$ (i.e. its points are closed), then each path-component is weakly contractible (has trivial homotopy groups) and then contractible. His idea was to use finite spaces weak homotopy equivalent to spheres. A map from a connected finite space to a $T_1$-space has a connected and discrete image and is therefore constant. This is one of the many interesting applications of non-Hausdorff spaces to homotopy theory. Goodwillie also proved that if a space inverts weak equivalences, then it must be connected. In this note we follow his ideas and give a further application of non-Hausdorff spaces to obtain the expected characterization:

\begin{teo} \label{main}
A non-empty space $X$ inverts weak homotopy equivalences if and only if it is contractible.
\end{teo}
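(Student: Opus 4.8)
The plan is to prove the non-trivial implication (inverting $\Rightarrow$ contractible); the converse is immediate, since if $X$ is contractible then $[A,X]$ is a single point for every space $A$, so $f^*$ is a bijection of one-point sets.

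First I would reduce the problem to weak contractibility. Fix a CW approximation, i.e.\ a weak equivalence $q\colon Y\to X$ with $Y$ a CW-complex; such a $q$ always exists (take the realization of the singular complex). Because $X$ inverts weak equivalences, $q^*\colon [X,X]\to[Y,X]$ is a bijection. Now $q^*[\mathrm{id}_X]=[q]$ and $q^*[c_{x_0}]=[c_{x_0}]$ for a constant map $c_{x_0}$, so $X$ is contractible as soon as $[q]=[c_{x_0}]$ in $[Y,X]$. If we knew that $X$ is weakly contractible, then $Y$ would be a weakly contractible CW-complex, hence contractible by Whitehead's theorem; then $q$, being a map out of a contractible space into a path-connected space, is null-homotopic, so $[q]=[c_{x_0}]$ and injectivity of $q^*$ forces $[\mathrm{id}_X]=[c_{x_0}]$. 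Thus the whole theorem follows once we show: a space that inverts weak equivalences is weakly contractible.

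To prove weak contractibility I would exploit the finite models of spheres, which is exactly where non-Hausdorff spaces enter. For each $n\ge 0$ let $\mathbb{S}^n$ be the finite space with $2n+2$ points obtained as the $n$-fold non-Hausdorff suspension of the discrete two-point space; McCord's theorem provides a weak equivalence $\mu\colon |\kp(\mathbb{S}^n)|\to \mathbb{S}^n$ whose domain is (homeomorphic to) a triangulation of $S^n$. Inverting $\mu$ yields a bijection $[\mathbb{S}^n,X]\cong[S^n,X]$, so that computing $\pi_n(X)$ is reduced to understanding homotopy classes of maps from the \emph{finite} space $\mathbb{S}^n$ into $X$. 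When $X$ is $T_1$ this is Goodwillie's observation: a map from a connected finite space to a $T_1$ space has connected, hence discrete, hence one-point image, so every such map is constant and $[\mathbb{S}^n,X]$ collapses; together with connectedness this yields weak contractibility. I would first record the analogous treatment of $\pi_0$ to promote connectedness to path-connectedness, and then run the argument sphere by sphere.

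The main obstacle is precisely the non-$T_1$ case: if the points of $X$ are not closed, a map $\mathbb{S}^n\to X$ need not be constant, so the image of a finite sphere can be a genuinely non-trivial finite subspace and Goodwillie's collapsing argument breaks down. Overcoming this is the heart of the matter and, I expect, the ``further application of non-Hausdorff spaces'' advertised above. My proposed line of attack is to replace the crude ``image is a point'' argument by an analysis of the finite image $F=g(\mathbb{S}^n)\subseteq X$: writing $\mathbb{S}^n=U^+\cup U^-$ as the union of its two open cones $U^\pm$ (each contractible, with $U^+\cap U^-=\mathbb{S}^{n-1}$), the restrictions $g|_{U^\pm}$ are null-homotopic for formal reasons, and one must feed the resulting lower-dimensional data back through the inversion property applied to the McCord maps of the relevant finite subspaces of $X$. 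Equivalently, one may try to show directly that $X$ is homotopy equivalent to a $T_1$ space, which by homotopy invariance of the property would reduce the general case to Goodwillie's. I expect the finite-space bookkeeping needed to make either route rigorous to be the principal difficulty.
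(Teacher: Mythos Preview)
Your reduction to weak contractibility is fine (and can be shortened: once $X$ is weakly contractible, apply the inversion hypothesis directly to the weak equivalence $X\to *$, as in Lemma~\ref{wc}). The genuine gap is exactly where you flag it: you do not establish weak contractibility in the non-$T_1$ case, and neither of your proposed routes is made to work. In particular, your treatment of $\pi_0$ via finite models cannot succeed: the minimal finite model of $S^0$ \emph{is} $S^0$, so the McCord map is the identity and inverting it yields no information about path-connectedness. For $n\ge 1$ the cone decomposition $\mathbb{S}^n=U^+\cup U^-$ only tells you that $g|_{U^\pm}$ are null-homotopic, which is automatic and does not by itself force $g$ to be null-homotopic; and ``show $X$ is homotopy equivalent to a $T_1$ space'' is essentially the conclusion you are trying to prove.

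The paper's argument is organized quite differently and does not use finite models of spheres at all. The two ideas you are missing are: (i) an exponential-law reduction (Lemma~\ref{exp}): since $S^n$ is locally compact Hausdorff, $X^{S^n}$ also inverts weak equivalences, so triviality of every $\pi_n(X)$ follows from path-connectedness of each $X^{S^n}$; and (ii) a cardinality trick for path-connectedness (Lemma~\ref{path2}): one builds a non-Hausdorff space $B$ of cardinality strictly larger than $\max\{\#X,\mathfrak c\}$ whose proper closed sets are exactly the subsets of cardinality at most that bound. Then $B$ has trivial path-components, so its discretization $A\to B$ is a weak equivalence; pushing a suitable map $A\to X$ through the inversion hypothesis and a pigeonhole count produces a point $x\in X$ lying in every open neighborhood of two prescribed points, hence in both of their path-components. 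Thus the ``further application of non-Hausdorff spaces'' is a large co-countable-style topology, not finite spheres, and the heart of the proof is the $\pi_0$ step you left open.
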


\begin{lema} [Goodwillie] \label{wc}
Suppose that $X$ inverts weak homotopy equivalences and is weakly contractible. Then it is contractible.
\end{lema}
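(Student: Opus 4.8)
The plan is to feed the weak equivalence hidden in the hypothesis directly into the inverting property. The key observation is that ``weakly contractible'' is precisely the statement that the constant map $c\colon X\to \ast$ to a one-point space is a weak homotopy equivalence: path-connectedness gives the isomorphism on $\pi_0$, and the vanishing of the higher homotopy groups gives it in all positive degrees (at every basepoint). Since $X$ inverts weak homotopy equivalences, applying the property to $c$ shows that
\[
c^*\colon [\ast,X]\longrightarrow [X,X]
\]
is a bijection.

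Next I would identify the two sides. The set $[\ast,X]$ is just $\pi_0(X)$, which is a single point because $X$ is path-connected; hence $[X,X]$ is a singleton as well. Concretely, $c^*$ sends the class of a map $\ast\to X$ to the class of the corresponding constant map $X\to X$, so its image consists of the homotopy classes of constant maps. The bijectivity of $c^*$ therefore forces every self-map of $X$, and in particular $\mathrm{id}_X$, to be homotopic to a constant. That is exactly the assertion that $X$ is contractible, which is what we want.

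There is essentially no hard step here; the whole argument is a matter of reading the definitions correctly. The only point that requires a moment's care is the bookkeeping of the induced map: one must check that $c^*$ really is the map $f^*$ of the inverting property for $f=c$, and that its image is the class of constant self-maps, so that forcing $[\mathrm{id}_X]$ into that image yields a nullhomotopy of the identity. No finite-space machinery is needed for this lemma; it is the purely formal step that upgrades ``weakly contractible'' to ``contractible'' once the inverting hypothesis is in place.
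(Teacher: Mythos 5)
Your argument is correct and is exactly the paper's proof: the paper's entire proof reads ``Just take the weak homotopy equivalence $X\to *$,'' and your write-up simply makes explicit why applying the inverting property to that map forces $\mathrm{id}_X$ to be nullhomotopic. No discrepancy to report.
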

\begin{proof}
Just take the weak homotopy equivalence $X\to *$.
\end{proof}

\begin{prop} [Goodwillie] \label{tom}
Let $X$ be a space which inverts weak homotopy equivalences. Then it is connected.
\end{prop}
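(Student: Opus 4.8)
The plan is to prove the contrapositive: if $X$ is not connected, then $X$ does not invert weak homotopy equivalences. So I would assume $X=X_1\sqcup X_2$ with $X_1,X_2$ nonempty, open and closed, and exhibit a single weak homotopy equivalence $f$ for which $f^*$ fails to be surjective. The guiding idea is to compare the discrete two-point space $S^0$ with a \emph{connected} space that is nevertheless weakly equivalent to it, thereby exploiting the gap between connectedness and path-connectedness (a weak equivalence only sees the latter).

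First I would fix a connected space $B$ together with a weak homotopy equivalence $f\colon S^0\to B$. Concretely, one can take $B$ to be the closed topologist's sine curve, whose two path components are each contractible while $B$ itself is connected; the map $f$ sending the two points of $S^0=\{0,1\}$ to one point in each path component induces a bijection on $\pi_0$ and isomorphisms (between trivial groups) on all higher homotopy groups at every basepoint, hence is a weak homotopy equivalence. Any connected, non path-connected space with two weakly contractible path components would serve equally well.

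Next I would compute both sides. Since a homotopy class of maps out of $S^0$ is determined by the path components hit by its two points, there is a natural identification $[S^0,X]\cong\pi_0(X)\times\pi_0(X)$, a set with at least four elements because $X_1$ and $X_2$ each contain at least one path component. On the other hand, because $B$ is connected, every map $g\colon B\to X$ has image inside a single connected component of $X$: the preimages of $X_1$ and $X_2$ form a separation of $B$, so one of them is empty. Precomposing with $f$ therefore sends every class of $[B,X]$ into the subset of $\pi_0(X)\times\pi_0(X)$ consisting of those pairs whose two path components lie in a common connected component of $X$.

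The conclusion is then immediate: choosing a path component $c_1\subseteq X_1$ and a path component $c_2\subseteq X_2$, the pair $(c_1,c_2)$ belongs to $[S^0,X]$ but not to the image of $f^*\colon[B,X]\to[S^0,X]$, so $f^*$ is not surjective and $X$ fails to invert $f$. I expect the main obstacle to be the construction of $B$, that is, producing a connected space weakly homotopy equivalent to the disconnected space $S^0$; this is precisely the step where the distinction between connectedness and path-connectedness is used. Once such a $B$ and $f$ are in hand, the identification of $[S^0,X]$ and the containment of the image of $f^*$ are routine.
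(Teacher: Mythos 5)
Your proof is correct, and it uses a genuinely different construction from the paper's. The paper argues directly rather than by contrapositive: it takes $A=\mathbb{N}_0$ discrete and $B=\{0\}\cup\{\frac{1}{n}\}_{n\in\mathbb{N}}\subseteq\mathbb{R}$ the convergent sequence, sends $0$ into one path component $X_0$ and all other points into another one $X_1$, and extracts a map $h$ on $B$; since $h$ is continuous and $\frac{1}{n}\to 0$, the point $h(0)\in X_0$ lies in the closure of $X_1$, so $X_0$ and $X_1$ sit in a common connected component. There the target $B$ is itself totally disconnected, and connectivity of $X$ is wrung out of continuity at the limit point. You instead use a \emph{connected} model $B$ weakly equivalent to $S^0$ (the closed topologist's sine curve), so the obstruction is immediate: a map out of a connected space cannot separate $X_1$ from $X_2$, hence $f^*$ misses the class corresponding to $(c_1,c_2)$ under $[S^0,X]\cong\pi_0(X)\times\pi_0(X)$. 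All your steps check out: the sine curve's two path components are each contractible, so your $f$ is indeed a weak homotopy equivalence, and the computation of the image of $f^*$ is as you say. Your version isolates the mechanism cleanly --- the entire content is the existence of a connected space weakly equivalent to the two-point discrete space --- whereas the paper's convergent-sequence model has the advantage of being reused verbatim in Proposition \ref{path} and of foreshadowing the ``discretization'' weak equivalence that Lemma \ref{path2} pushes, via a cardinality argument, to get the stronger conclusion of path-connectedness.
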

\begin{proof}
We can assume $X$ is non-empty. Suppose that $X_0$ and $X_1$ are two path-components of $X$. Let $x_0\in X_0$ and $x_1\in X_1$. Let $A=\mathbb{N}_{0}$ be the set of nonnegative integers with the discrete topology and $B=\{0\}\cup \{\frac{1}{n}\}_{n\in \mathbb{N}} \subseteq \mathbb{R}$ with the usual topology. The map $f:A\to B$ which maps $0$ to $0$ and $n$ to $\frac{1}{n}$ for every $n$, is a weak homotopy equivalence. Take $g:A\to X$ defined by $g(0)=x_0$ and $g(n)=x_1$ for every $n\ge 1$. By hypothesis there exists a map $h:B\to X$ such that $h(0)\in X_0$ and $h(\frac{1}{n})\in X_1$ for every $n\ge 1$. Since $\frac{1}{n} \rightarrow 0$, $X_0$ intersects the closure of $X_1$. Thus $X_0$ and $X_1$ are contained in the same component of $X$.  
\end{proof}

\begin{lema} \label{exp}
Let $X$ be a space which inverts weak equivalences and let $Y$ be a locally compact Hausdorff space. Then the mapping space $X^Y$, considered with the compact-open topology, also inverts weak equivalences.
\end{lema}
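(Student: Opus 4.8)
The plan is to show that for a weak homotopy equivalence $f:A\to B$, the induced map $f^*:[B,X^Y]\to [A,X^Y]$ is a bijection, using the adjunction between the mapping space functor and the product with $Y$. Since $Y$ is locally compact Hausdorff, the exponential law gives a natural bijection (indeed a homeomorphism) $\mathrm{Map}(A,X^Y)\cong \mathrm{Map}(A\times Y, X)$, and this should descend to a natural bijection on homotopy classes $[A,X^Y]\cong [A\times Y,X]$, compatible with precomposition. So the first step is to set up this adjunction carefully at the level of homotopy classes, verifying that the homeomorphism $X^{A\times Y}\cong (X^Y)^A$ sends homotopic maps to homotopic maps in both directions — homotopies being themselves maps out of a product $A\times I$ (and $A\times I\times Y\cong A\times Y\times I$), so a homotopy on one side transports to a homotopy on the other.

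With the adjunction in place, the problem reduces to showing that $(f\times \mathrm{id}_Y)^*:[B\times Y,X]\to [A\times Y,X]$ is a bijection. Since $X$ inverts weak equivalences by hypothesis, it suffices to prove that $f\times \mathrm{id}_Y:A\times Y\to B\times Y$ is again a weak homotopy equivalence whenever $f$ is. This is the heart of the argument, and I expect it to be the main obstacle: one must check that taking the product with a fixed space preserves weak equivalences. The cleanest route is to observe that $\pi_n$ commutes with products and that for each basepoint the map on homotopy groups of the product factors as a product of the maps induced by $f$ and by $\mathrm{id}_Y$; since $f$ is a weak equivalence the factor coming from $f$ is an isomorphism, and the identity contributes an isomorphism trivially. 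One subtlety is that weak equivalence requires a bijection on $\pi_0$ and isomorphisms on all $\pi_n$ for every choice of basepoint in the product, so I would phrase the basepoint bookkeeping explicitly: a basepoint of $A\times Y$ is a pair $(a,y)$, and $\pi_n(A\times Y,(a,y))\cong \pi_n(A,a)\times \pi_n(Y,y)$ naturally.

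A caveat is that the naive statement ``product with $Y$ preserves weak equivalences'' can fail for arbitrary $Y$ when $A$ and $B$ are not well behaved, but here the needed naturality of the product decomposition of homotopy groups holds for all spaces $A$, $B$, and $Y$, so no CW or local assumption on $A,B$ is required; the local compactness and Hausdorffness of $Y$ are used only to guarantee the exponential law. Assembling the pieces, the composite bijection
\[
[B,X^Y]\;\cong\;[B\times Y,X]\;\xrightarrow{(f\times \mathrm{id}_Y)^*}\;[A\times Y,X]\;\cong\;[A,X^Y]
\]
is seen to coincide with $f^*$ by naturality of the adjunction in the first variable, and each arrow is a bijection: the outer two by the exponential law and the middle one because $f\times \mathrm{id}_Y$ is a weak equivalence and $X$ inverts such maps. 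I would close by remarking that all the adjunction isomorphisms are natural in $A$, which is exactly what is needed to identify the composite with $f^*$ and thereby conclude that $X^Y$ inverts weak homotopy equivalences.
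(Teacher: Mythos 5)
Your argument is correct and is essentially the paper's own (very terse) proof spelled out in full: the exponential law for locally compact Hausdorff $Y$ reduces the claim to the fact that $f\times 1_Y:A\times Y\to B\times Y$ is a weak homotopy equivalence, which follows from the natural splitting $\pi_n(A\times Y,(a,y))\cong \pi_n(A,a)\times \pi_n(Y,y)$. One minor remark: the ``caveat'' you raise is unnecessary, since crossing a weak equivalence with an identity map preserves weak equivalences for completely arbitrary spaces, exactly as your own basepoint-by-basepoint argument shows.
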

\begin{proof}
This follows from a direct application of the exponential law and the fact that a weak equivalence $f:A\to B$ induces a weak equivalence $f\times 1_Y:A\times Y \to B\times Y$.
\end{proof}

By Lemmas \ref{wc} and \ref{exp} it only remains to show that a map that inverts weak equivalences is path-connected. If we require a slightly different property, this is easy to prove using only Hausdorff spaces. The following result is not needed for the proof of Theorem \ref{main}.

\begin{prop} \label{path}
Let $(X,x_0)$ be a pointed space such that for every weak homotopy equivalence $f:A\to B$ between Hausdorff spaces and every $a_0\in A$, the induced map $$f^*:[(B,f(a_0)),(X,x_0)]\to [(A,a_0),(X,x_0)]$$ is a bijection. Then $X$ is path-connected.
\end{prop}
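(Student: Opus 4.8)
The plan is to fix an arbitrary point $x_1\in X$ and to produce a path from $x_0$ to $x_1$; since $x_1$ is arbitrary this proves path connectedness. As a first step I would run Goodwillie's argument from Proposition~\ref{tom} in the pointed setting: with $f\colon\mathbb{N}_{0}\to\{0\}\cup\{\tfrac1n\}_{n\in\mathbb{N}}$ based at $0$, surjectivity of $f^{*}$ applied to the map $g$ with $g(0)=x_0$ and $g(n)=x_1$ $(n\ge1)$ yields a based map $h$ of the convergent sequence into $(X,x_0)$ together with a based homotopy $h\circ f\simeq g$, from which one reads off points $h(\tfrac1n)\to x_0$ each joined to $x_1$ by a path. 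Exactly as in Proposition~\ref{tom} this only shows that $x_0$ lies in the closure of the path component $C_{x_1}$ of $x_1$, i.e.\ that $X$ is connected, and by itself it is not enough.

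The key improvement I would make is to replace the totally disconnected source $\mathbb{N}_{0}$ by a connected one. Let $B$ be a connected Hausdorff space with exactly two path components $P$ and $Q$, each contractible and each contained in the closure of the other (such a space is readily built in the plane, for instance from two interleaved sine-curve graphs), so that the map $\{0,1\}\to B$ sending the two points to $P$ and to $Q$ is a weak homotopy equivalence between Hausdorff spaces. Base it at the point of $P$, which must go to $x_0$. If $h\colon B\to(X,x_0)$ is a lift realizing, via surjectivity of $f^{*}$, the class sending the free point into $C_{x_1}$, then $h(P)\subseteq C_{x_0}$ and $h(Q)\subseteq C_{x_1}$; since the basepoint of $P$ lies in $\overline{Q}$ and $Q$ lies in $\overline{P}$, continuity of $h$ forces $x_0\in\overline{C_{x_1}}$ and $C_{x_1}\cap\overline{C_{x_0}}\neq\varnothing$ for every $x_1$. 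Conversely, whenever these closure relations fail no such lift exists, so surjectivity is violated; this already rules out spaces such as the closed topologist's sine curve, for which one of the two relations fails for a suitable basepoint.

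The main obstacle — and where I expect the real difficulty to lie — is that the relations just obtained are still only about closures, whereas the conclusion demands an honest path. The doubly accumulating space $B$ itself satisfies both relations without being path connected, so a single such model cannot finish the proof; one must push further. Here I would use the full strength of the hypothesis, namely bijectivity of $f^{*}$ for every domain basepoint $a_0$ together with injectivity, to force the approximating paths produced above to be mutually compatible, and then iterate the two component construction along the accumulation so that the compatible family assembles, in the limit, into a continuous path $[0,1]\to X$ from $x_0$ to $x_1$. Making this limiting assembly precise — ensuring the approximating paths genuinely converge rather than merely accumulate — is the decisive step, and it is the pointed, Hausdorff analogue of the non-Hausdorff input that Theorem~\ref{main} requires.
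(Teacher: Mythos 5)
There is a genuine gap: the one step that actually proves the proposition is announced but never carried out. Your first paragraph correctly reproduces the paper's starting point (the based convergent-sequence model and surjectivity of $f^*$ give $h:(B,0)\to (X,x_0)$ with $h(\frac{1}{n})\in X_1$ for all $n$), and you correctly diagnose that this only yields closure relations. But your proposed remedy stops at the sentence ``Making this limiting assembly precise \dots is the decisive step'' --- that sentence is the entire content of the proof, and no mechanism is given for it. The detour through a connected Hausdorff space with two mutually accumulating contractible path components does not help: as you yourself observe, it again produces only statements of the form $x_0\in\overline{C_{x_1}}$, and no amount of iterating closure relations can manufacture a continuous map out of $[0,1]$. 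The phrase ``use injectivity to force the approximating paths to be mutually compatible'' points in the right direction, but compatibility of countably many paths is not the issue; the issue is continuity of their infinite concatenation at the accumulation point, and nothing in your outline addresses that.

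The paper's actual mechanism, which is what is missing, is a shift trick on the convergent sequence itself. Given $h$ as above, define $h':B\to X$ by $h'(0)=x_0$ and $h'(\frac{1}{n})=h(\frac{1}{n+1})$. Since $A$ is discrete and $h'(\frac{1}{n})$, $h(\frac{1}{n})$ lie in the same path component for every $n$, one can choose paths pointwise to get a homotopy $f^*(h')\simeq f^*(h)$ rel $\{0\}$. Injectivity of $f^*$ (for this single model, at the basepoint $0$) then upgrades this to a homotopy $F:B\times I\to X$ with $F:h'\simeq h$ rel $\{0\}$ defined on $B$ itself. The continuity of $F$ at $\{0\}\times I$ is exactly the convergence you were trying to engineer: concatenating the paths $F(\frac{1}{n},-)$ from $h(\frac{1}{n+1})$ to $h(\frac{1}{n})$ over the intervals $[\frac{1}{n+1},\frac{1}{n}]$ gives a map $\gamma:I\to X$ with $\gamma(0)=x_0$ and $\gamma(1)=h(1)\in X_1$, whose continuity at $0$ follows from the tube lemma applied to $F^{-1}(U)\supseteq\{0\}\times I$. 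Without this (or some equivalent device converting the pointwise homotopy into one continuous at the limit point), the argument does not close, so as written the proposal does not prove the statement.
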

\begin{proof}
Let $X_0$ be the path-component of $x_0$. Let $X_1$ be any path-component of $X$ and let $x_1\in X_1$. Let $A, B, f, g$ be as in Proposition \ref{tom}. Let $a_0=0\in A$. By hypothesis there exists $h:(B,0)\to (X,x_0)$ such that $hf\simeq g \textrm{ rel } \{0\}$. In particular $h(1)\in X_1$. Define $h':B\to X$ by $h'(0)=x_0$ and $h'(\frac{1}{n})=h(\frac{1}{n+1})$ for $n\ge 1$. The continuity of $h'$ follows from that of $h$. Since $h'(\frac{1}{n})=h(\frac{1}{n+1}) \in X_1$ and $h(\frac{1}{n})\in X_1$ for every $n\ge 1$, there exists a homotopy $H:A\times I\to Y$ from $f^*(h')$ to $f^*(h)$. Moreover we can take $H$ to be stationary on $0\in A$. Since $f^*:[(B,0),(X,x_0)]\to [(A,0),(X,x_0)]$ is injective, there exists a homotopy $F:B\times I \to X$, $F:h'\simeq h \textrm{ rel } \{0\}$. The map $F$ gives a collection of paths from $h(\frac{1}{n+1})$ to $h(\frac{1}{n})$. We glue all these paths to form a path from $x_0$ to $h(1)$. That is, define $\gamma :I\to X$ by $\gamma(0)=x_0$ and $\gamma (t)=F(\frac{1}{n},(\frac{1}{n}-\frac{1}{n+1})^{-1}(t-\frac{1}{n+1}))$ if $t\in [\frac{1}{n+1}, \frac{1}{n}]$. Note that $\gamma$ is continuous in $t=0$ for if $U\subseteq X$ is a neighborhood of $x_0$, then $\{0\}\times I \subseteq F^{-1}(U)$, and by the tube lemma there exists $n_0\ge 1$ such that $\{\frac{1}{n}\}\times I\subseteq F^{-1}(U)$ for every $n\ge n_0$. Then $[0,\frac{1}{n_0}]\subseteq \gamma ^{-1}(U)$. Hence, $x_0$ and $h(1)$ lie in the same path-component, so $X_0=X_1$.  
\end{proof}

Note that if a contractible space $X$ satisfies the hypothesis of Proposition \ref{path} for some point $x_0$, then by taking $A=B=X$, $a_0=x_0$ and $f$ the constant map $x_0$, one obtains that $\{x_0\}$ is a strong deformation retract of $X$. Conversely, a based space $(X,x_0)$ such that $\{x_0\}$ is a strong deformation retract of $X$, clearly satisfies the hypothesis of the proposition.

\bigskip

The following result is the key lemma for proving Theorem \ref{main} and in contrast to the previous result, the proof provided uses non-Hausdorff spaces.

\begin{lema} \label{path2}
Let $X$ be a space which inverts weak homotopy equivalences. Then $X$ is path-connected.
\end{lema}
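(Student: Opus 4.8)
The plan is to begin from the configuration already produced in Proposition \ref{tom} and to upgrade the merely topological convergence found there into an honest path, in the spirit of Proposition \ref{path}, but compensating for the absence of the pointed hypothesis by means of a non-Hausdorff space.

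First I would fix $x_0,x_1$ in path-components $X_0,X_1$ of $X$ and apply surjectivity of $f^*$ for the weak homotopy equivalence $f:\N_0\to B$, $B=\{0\}\cup\{\frac1n\}$, to the map $g$ with $g(0)=x_0$, $g(n)=x_1$. This produces a continuous $h:B\to X$ with $p:=h(0)\in X_0$, $q_n:=h(\frac1n)\in X_1$ and $q_n\to p$. The goal is then a genuine path from $p$ into $X_1$, which amounts to choosing paths $\delta_n\subseteq X_1$ from $q_{n+1}$ to $q_n$ whose concatenation $\delta_1*\delta_2*\cdots$ (placed on the intervals $[\frac1{n+1},\frac1n]$) extends continuously by $p$ at $0$. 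Each individual $\delta_n$ exists, since the $q_n$ share the component $X_1$; the entire difficulty is the \emph{coherence} that $\delta_n$ eventually lie inside every neighbourhood of $p$, which is exactly what the rel-basepoint homotopy supplied in Proposition \ref{path} and what fails for a general connected space such as the topologist's sine curve.

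To recover this coherence without a basepoint I would introduce a non-Hausdorff space $\tilde B$ whose specialization order records that the segment joining two consecutive terms degenerates onto the limit point, together with a weak homotopy equivalence $f:A\to\tilde B$ from a Hausdorff space $A$ on which arbitrary connecting paths in $X_1$ can be prescribed. Transferring such data through surjectivity of $f^*$ — and, if it is convenient, first replacing $X$ by the mapping space $X^I$, which also inverts weak equivalences by Lemma \ref{exp}, so as to treat the connecting paths themselves as points — one obtains a map on $\tilde B$ whose continuity \emph{across the non-Hausdorff accumulation} forces the transferred connecting paths to collapse toward $p$. The neighbourhood structure of $\tilde B$ at its limit point is then meant to play, in the proof that the assembled path is continuous at $p$, the role that the tube lemma and the constant slice $F(0,\cdot)=x_0$ played in Proposition \ref{path}.

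The step I expect to be the main obstacle is precisely this continuity at the accumulation point, i.e.\ guaranteeing that the transferred connecting paths are eventually contained in each neighbourhood of $p$. Equivalently, the crux is to design $\tilde B$ and the weak equivalence $f$ so that the coherence granted for free by the pointed hypothesis in Proposition \ref{path} becomes an automatic consequence of the non-Hausdorff topology rather than an additional assumption. Once this is in place, concatenating the resulting paths yields a path from $p\in X_0$ to a point of $X_1$, whence $X_0=X_1$ and $X$ is path-connected.
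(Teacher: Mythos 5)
There is a genuine gap: your text is a plan whose central step is explicitly left open, not a proof. You correctly isolate the difficulty --- one can join consecutive terms $q_{n+1}, q_n$ of the convergent sequence by paths $\delta_n$ inside $X_1$, but nothing forces these paths to be eventually contained in each neighbourhood of $p$, so the infinite concatenation need not be continuous at $0$ (the topologist's sine curve shows this coherence genuinely fails for general connected spaces). But the proposed remedy --- a non-Hausdorff space $\tilde B$ whose topology at the accumulation point would ``force the transferred connecting paths to collapse toward $p$'' --- is never constructed, and it is not clear it can be. The obstruction is structural: surjectivity of $f^*$ only produces a map $h$ on $\tilde B$ together with an unrestricted homotopy $hf\simeq g$, so you lose control of basepoints and endpoints; in particular, even passing to $X^I$ you cannot arrange that $h$ sends the limit point to the constant path at $p$, nor that the transferred paths have matching endpoints for concatenation. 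This is exactly the control that the rel-basepoint hypothesis supplied in Proposition \ref{path}, and your sketch does not explain how the topology of $\tilde B$ alone recovers it.

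The paper's proof avoids assembling an infinite concatenation altogether. It takes $B$ a set of cardinality greater than $\alpha=\max\{\#X, c\}$ with the topology in which a proper subset is closed iff it has cardinality at most $\alpha$; then every path in $B$ is constant, the identity from the discretization $A$ of $B$ is a weak equivalence, and a map $h:B\to X$ obtained by surjectivity of $id^*$ must, by counting, have a fibre $h^{-1}(x)$ of cardinality greater than $\alpha$. For any open $U\ni h(b_0)$ the set $h^{-1}(U^c)$ is a proper closed set, hence of cardinality at most $\alpha$, so it cannot contain $h^{-1}(x)$ and therefore $x\in U$. Thus $x$ lies in \emph{every} neighbourhood of $h(b_0)$ (and symmetrically of $h(b_1)$), and the elementary specialization path $t\mapsto x$ for $t<1$, $1\mapsto h(b_0)$ joins them in two steps. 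The non-Hausdorff phenomenon is thereby located in $X$ itself (a single point $x$ specializing to representatives of both components), rather than engineered into an auxiliary space $\tilde B$ as you attempt; this is the idea missing from your argument.
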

\begin{proof}
We can assume $X$ is non-empty. Let $X_0$ and $X_1$ be path-components of $X$. Let $B$ be a set with cardinality $\# B > \alpha= \max \{\#X,c\}$. Here $c$ denotes the cardinality $\# \mathbb{R}$ of the continuum. Consider the following topology in $B$: a proper subset $F\subseteq B$ is closed if and only if $\# F\le \alpha$. Note that the path-components of $B$ are the singletons, for if $\gamma :I\to B$ is a path, then its image has cardinality at most $\alpha$, so it is connected and discrete and then constant. Let $A$ be the discretization of $B$, i.e. the same set with the discrete topology. Then the identity $id:A\to B$ is a weak homotopy equivalence. Let $b_0$ and $b_1$ be two different points of $B$. Define $g:A\to X$ in such a way that $g(b_0)\in X_0$ and $g(b_1)\in X_1$ (define $g$ arbitrarily in the remaining points of $A$). Then $g$ is continuous. Since the identity $id^*:[B,X]\to [A,X]$ is surjective, there exists a map $h:B\to X$ such that $h\circ id\simeq g$. In particular $h(b_0)\in X_0$ and $h(b_1)\in X_1$. Since $\#B> \alpha \ge \# X$ and $B =\bigcup\limits_{x\in X} h^{-1}(x)$, there exists $x\in X$ such that $\#h^{-1}(x)>\alpha$. Let $U\subseteq X$ be an open neighborhood of $h(b_0)$. Then $h^{-1}(U^c)\subseteq B$ is a proper closed subset, so $\# h^{-1}(U^c) \le \alpha$. Thus, $h^{-1}(x)$ is not contained in $h^{-1}(U^c)$ and then $x\in U$. Since every open neighborhood of $h(b_0)$ contains $x$, there is a continuous path from $x$ to $h(b_0)$, namely $t\mapsto x$ for $t<1$ and $1\mapsto h(b_0)$. In particular $x\in X_0$. Symmetrically, $x\in X_1$. Therefore $X_0=X_1$.     
\end{proof}

\textit{Proof of Theorem \ref{main}}. It is clear that a contractible space inverts weak equivalences. Suppose $X\neq \emptyset$ is a space which inverts weak equivalences. By Lemma \ref{exp}, $X^{S^n}$ inverts weak equivalences for every $n\ge 0$ and then it is path-connected. Therefore $\pi_n(X)$ is trivial for every $n\ge 0$ and by Lemma \ref{wc}, $X$ is contractible.  

%\bigskip

%We have used two ideas which were first used by Goodwillie. The first one is that if $B$ is a space whose path-components are singletons, there is a weak homotopy equivalence from the discretization $A$ to $B$.
%
%The second idea is to construct from a space $X$ a space $B$ such that the maps $B\to X$ are controlled in some way. Every map from a connected finite space to a $T_1$-space is constant and every map $B\to X$ in the proof of Lemma \ref{path2} has a path-connected image. Both spaces $B$ in these examples are non-Hausdorff. The following question remains open:
%
%\begin{ques}
%If a space $X$ inverts weak homotopy equivalences between Hausdorff spaces, is it necessarily contractible?
%\end{ques}

\end{document}